\newcommand{\Natural}{\mathbb N}
\newcommand{\abs}[1]{\left\vert#1\right\vert}
\newcommand{\set}[1]{\left\{#1\right\}}
\newcommand{\norm}[1]{\left\Vert#1\right\Vert}
\newcommand{\duality}[1]{\left\langle#1\right\rangle}
\newcommand{\Free}{{\mathcal F}}
\newcommand{\Lip}{{\mathrm{Lip}}}
\newcommand{\F}[1]{\mathcal{F}(#1)}
\newcommand{\conv}{\mathop\mathrm{conv}}
\newcommand{\ext}{\mathop\mathrm{ext}}
\theoremstyle{plain}
\newtheorem{thm}{Theorem}
\newtheorem{lem}[thm]{Lemma}
\newtheorem{prop}[thm]{Proposition}
\theoremstyle{definition}
\newtheorem{rem}[thm]{Remark}
\title{On exposed points of Lipschitz free spaces}
\author[C. Petitjean]{Colin Petitjean}\thanks{This work was supported by the French ``Investissements d'Avenir'' program, project ISITE-BFC (contract ANR-15-IDEX-03).}
\address[C. Petitjean]{Universit\'e Bourgogne Franche-Comt\'e, Laboratoire de Math\'ematiques UMR 6623, 16 route de Gray,
25030 Besan\c con Cedex, France}\email{colin.petitjean@univ-fcomte.fr}
\author[A. Proch\'azka]{Anton\'in Proch\'azka}
\address[A. Proch\'azka]{Universit\'e Bourgogne Franche-Comt\'e, Laboratoire de Math\'ematiques UMR 6623, 16 route de Gray,
25030 Besan\c con Cedex, France}
\email{antonin.prochazka@univ-fcomte.fr}
\keywords{Exposed point; Lipschitz free;}
\subjclass[2010]{Primary 46B20; Secondary 54E50}
\date{October, 2018}
\begin{document}
\begin{abstract}
 In this note we prove that  a molecule $d(x,y)^{-1}(\delta(x)-\delta(y))$ is an exposed point of the unit ball of a Lispchitz free space $\F M$ if and only if the metric segment $[x,y]=\{z \in M \; : \; d(x,y)=d(z,x)+d(z,y) \}$ is reduced to $\{x,y\}$. This is based on a recent result due to Aliaga and Perneck\'a which states that the class of Lipschitz free
spaces over closed subsets of M is closed under arbitrary intersections when M has finite diameter.
\end{abstract}
\maketitle

\section{Introduction}

For a metric space $(M,d)$ with a distinguished point $0 \in M$, we let $\Lip_0(M)$ be the real Banach space of Lipschitz maps from $M$ to $\mathbb R$ which vanish at $0$. 
We recall that the norm of $f \in \Lip_0(M)$, denoted $\| f \|_L$, is the best Lipschitz constant of $f$, i.e.
$$
\norm{f}_L=\sup_{x\neq y \in M}\frac{\abs{f(x)-f(y)}}{d(x,y)}.
$$ 
Next, for $x \in M$, we let $\delta(x) \in \Lip_0(M)^*$ be Dirac measure, i.e. $\duality{\delta(x),f}=f(x)$. We then define the Lipschitz free space over $M$ to be the following closed subspace of $\Lip_0(M)^*$:
$$\Free(M) := \overline{\mathrm{span}}\{\delta(x) \; : \; x \in M\}. $$
It follows from the fundamental linearisation property of Lipschitz free spaces that $\Free(M)$ is a canonical predual of $\Lip_0(M)$ (see \cite{GK} for more details).
\medskip

In this note we are interested in extreme points and exposed points of the unit ball of Lipschitz free spaces. If $B_X$ denotes the unit ball of a Banach space $X$, we recall that $x \in B_X$ is an extreme point of $B_X$ whenever $x \not \in \conv(B_X \setminus \{x\})$. Next, $x$ is an exposed point of $B_X$ if there exists a linear functional $f \in X^*$ such that $f(x) > f(z)$ for every $z \in B_X \setminus \{ x \}$. In what follows, $\ext(B_X)$ denotes the set of extreme points of $B_X$ while $\exp(B_X)$ denotes the set of exposed points of $B_X$.  Is is readily seen that $\exp(B_X)\subset \ext(B_X)$.
\smallskip

The extremal structure of Lipschitz free spaces has already been investigated in a number of articles \cite{AG,AP, Duality, Daugavet, Weaver}.
In any such study a special attention is dedicated to the elements of $\Free(M)$ of the form $m_{xy}=\frac{\delta(x)-\delta(y)}{d(x,y)}$ which we call \emph{molecules} (and which are called elementary molecules in~\cite{AP}).
It is simply a matter of writing down the corresponding convex combination to see that $m_{xy} \in \ext(B_{\Free(M)})$ implies that $[x,y]=\set{x,y}$. However, it is only recently that Aliaga and Perneck\'a~\cite{AP} managed to prove that, for a complete $M$, the reverse implication is also valid. 
Here, using one of the ingredients of their proof, we show the following stronger result.

\begin{thm}\label{t:ExposedMain}
Let $M$ be a complete metric space and $p\neq q \in M$ satisfy $[p,q]=\set{p,q}$. 
Then $m_{pq}$ is an exposed point of $B_{\Free(M)}$.
It is exposed by the 
magic function 
\[f_{pq}(t):= \frac{d(x,y)}{2}\left(\frac{d(t,q)-d(t,p)}{d(t,q)+d(t,p)}-\frac{d(0,q)-d(0,p)}{d(0,q)+d(0,p)}\right).\]
\end{thm}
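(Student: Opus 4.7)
The plan is three-fold: (i) verify that $f_{pq}\in \Lip_0(M)$ has norm $1$ and is normed by $m_{pq}$; (ii) use a localized perturbation argument combined with the Aliaga--Perneck\'a intersection theorem to show that any $\mu\in B_{\Free(M)}$ with $\langle\mu,f_{pq}\rangle=1$ must lie in $\Free(\{0,p,q\})$; and (iii) conclude with a direct finite-dimensional verification. The main work is in step (ii).

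For (i), write $\varphi(t)=d(t,p)$ and $\psi(t)=d(t,q)$, so that $f_{pq}=\tfrac{d(p,q)}{2}(g-g(0))$ with $g=(\psi-\varphi)/(\psi+\varphi)$. A direct expansion, followed by a telescoping of the numerator and the triangle inequality $\varphi+\psi\geq d(p,q)$, yields the slope bound
\[
|f_{pq}(s)-f_{pq}(t)|\leq \frac{d(p,q)\,d(s,t)}{\max(\varphi(s)+\psi(s),\varphi(t)+\psi(t))}\leq d(s,t),
\]
with equality throughout if and only if both $s$ and $t$ lie in $[p,q]=\{p,q\}$. The evaluations $g(p)=1$, $g(q)=-1$ then give $\langle m_{pq},f_{pq}\rangle=1$.

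For (ii), fix $\mu\in B_{\Free(M)}$ with $\langle\mu,f_{pq}\rangle=1$. For each $z\in M\setminus\{0,p,q\}$ the hypothesis $[p,q]=\{p,q\}$ ensures $\eta_z:=\varphi(z)+\psi(z)-d(p,q)>0$; choose $r_z>0$ small enough that $B(z,r_z)$ avoids $\{0,p,q\}$ and every $s\in B(z,r_z)$ satisfies $\varphi(s)+\psi(s)\geq d(p,q)+\eta_z/2$. The slope bound then yields a uniform deficit $c_z>0$ on all pairs $(s,s')$ with at least one endpoint in $B(z,r_z)$. Consequently, any $h\in\Lip_0(M)$ vanishing off $B(z,r_z)$ satisfies $\|f_{pq}+th\|_L\leq 1$ for $|t|\leq c_z/\|h\|_L$, and pairing with $\mu$ using both signs of $t$ forces $\langle\mu,h\rangle=0$. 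This precisely says $\mu\in\Free(N_z)$ for the closed set $N_z:=M\setminus B(z,r_z)\supseteq\{0,p,q\}$. The Aliaga--Perneck\'a intersection theorem then gives
\[
\mu\in\bigcap_{z\in M\setminus\{0,p,q\}}\Free(N_z)=\Free\bigl(\bigcap_z N_z\bigr)=\Free(\{0,p,q\}).
\]

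For (iii), the finite-dimensional space $\Free(\{0,p,q\})$ has polytopal unit ball, and any extreme molecule other than $\pm m_{pq}$ corresponds to a pair $\{a,b\}\ne\{p,q\}$, hence (by the slope bound) pairs with $f_{pq}$ to an absolute value strictly less than $1$. As $\langle -m_{pq},f_{pq}\rangle=-1$, it follows that $m_{pq}$ is the only extreme point on the hyperplane $\{\langle\cdot,f_{pq}\rangle=1\}$, so the face of the unit ball cut by this hyperplane reduces to $\{m_{pq}\}$, giving $\mu=m_{pq}$. The heart of the argument is the quantitative slope estimate of step (i), which simultaneously produces $\|f_{pq}\|_L=1$, the equality characterization, and the positive deficit $c_z$ driving the perturbation in (ii). A secondary technical issue is the finite-diameter hypothesis in Aliaga--Perneck\'a; one addresses this by first restricting to a large bounded closed subset $M'\subseteq M$ containing $\{0,p,q\}$ and using the isometric embedding $\Free(M')\hookrightarrow\Free(M)$.
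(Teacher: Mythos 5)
Your steps (i) and (iii) are essentially sound, and your step (ii) takes a genuinely different route from the paper: instead of writing $\mu$ as an $\ell_1$-combination of molecules (Lemma~\ref{l:folklor}) and running the convexity estimate of Lemma~\ref{l:convexity}, you exploit the quantitative slope deficit of $f_{pq}$ away from $[p,q]$ to perturb $f_{pq}$ by functions $h$ supported on small balls, deduce $\duality{\mu,h}=0$ from $\norm{f_{pq}+th}_L\le 1$ for both signs of $t$, and then identify $\mu$ as an element of $\Free(N_z)$ via the standard fact that $\Free(N_z)$ is the pre-annihilator of $\set{h\in\Lip_0(M):h\restricted_{N_z}=0}$. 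That part is correct and avoids the $\ell_1$ machinery of Lemmas~\ref{l:folklor} and~\ref{l:convexity} altogether; the slope identity you expand is exactly the content of Lemma~\ref{lemma:IKWfunction}, which the paper imports from the literature.

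The gap is in the final appeal to the Aliaga--Perneck\'a theorem. Proposition~\ref{p:Intersection} is only available for a \emph{bounded} complete ambient space, and your sets $N_z=M\setminus B(z,r_z)$ are complements of balls, hence unbounded whenever $M$ is. Your proposed remedy --- restricting to a large bounded closed $M'\supseteq\set{0,p,q}$ and using $\Free(M')\hookrightarrow\Free(M)$ --- is circular: an element of $\Free(M)$ need not belong to $\Free(M')$ for any bounded $M'$, nothing in steps (i)--(ii) shows that your $\mu$ does, and knowing $\mu\in\Free(N_z)$ does not give $\mu\in\Free(N_z\cap M')$. So for unbounded $M$ the proof as written does not close. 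The repair, however, is already contained in your own slope bound: for $\alpha\in(0,\frac12)$ one has $\abs{f_{pq}(s)-f_{pq}(s')}\le(1-\alpha)\,d(s,s')$ whenever $s\notin[p,q]_\alpha$, a \emph{uniform} deficit, so the same perturbation argument applied to any $h$ vanishing on $[p,q]_\alpha$ yields $\duality{\mu,h}=0$, i.e.\ $\mu\in\Free([p,q]_\alpha)$ exactly. These sets are closed and bounded, so Proposition~\ref{p:Intersection} applies inside $[p,q]_{1/2}$ and gives $\mu\in\Free(\bigcap_\alpha[p,q]_\alpha)=\Free(\set{p,q})$. Done this way your argument is complete and in fact yields a cleaner, exact version of the paper's Lemma~\ref{l:QuotientTrick} (no $2\varepsilon$-error term); as written, though, the localization to the balls $B(z,r_z)$ is the wrong choice of sets and leaves the unbounded case unproved.
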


\section{Proof of the main result}

The authors of~\cite{AP} had the following important insight which is likely to have many more applications in analysis of Lipschitz free spaces.
\begin{prop}[Aliaga and Perneck\'a~\cite{AP}]\label{p:Intersection}
Let $M$ be a bounded complete metric space. Let $\set{M_\alpha\subset M:\alpha \in A}$ be a collection of closed subsets of $M$ containing $0$.
Then $$\bigcap_{\alpha \in A} \Free(M_\alpha)=\Free\Big(\bigcap_{\alpha \in A} M_\alpha\Big).$$
\end{prop}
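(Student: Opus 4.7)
The easy inclusion $\mathcal{F}(\bigcap_\alpha M_\alpha) \subseteq \bigcap_\alpha \mathcal{F}(M_\alpha)$ follows at once from the monotonicity $N' \subseteq M_\alpha \Rightarrow \mathcal{F}(N') \subseteq \mathcal{F}(M_\alpha)$, so the real content is the reverse inclusion. Setting $N := \bigcap_\alpha M_\alpha$, the plan is to dualize. Since $\mathcal{F}(M_\alpha)$ is the pre-annihilator in $\mathcal{F}(M)$ of
\[
V_\alpha := \{f \in \Lip_0(M) : f|_{M_\alpha} = 0\},
\]
and $\mathcal{F}(N) = {}^\perp V_N$ with $V_N := \{f \in \Lip_0(M) : f|_N = 0\}$, the bipolar theorem reduces the claim to the $w^*$-density statement
\[
V_N \subseteq \csp{w^*} \left( \bigcup_\alpha V_\alpha \right).
\]
Because $\linsp\{\delta(x) : x \in M\}$ is norm-dense in $\mathcal{F}(M)$, a net of Lipschitz functions converges $w^*$ as soon as it converges pointwise with uniformly bounded Lipschitz constants. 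The task is therefore to approximate a given $f \in V_N$ pointwise by finite linear combinations drawn from $\bigcup_\alpha V_\alpha$ while keeping the Lipschitz constants uniformly controlled.

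The construction uses a Whitney-type decomposition of $M \setminus N$. For each $x \in M \setminus N$, one picks an index $\alpha(x) \in A$ with $x \notin M_{\alpha(x)}$ and sets $\rho(x) := \tfrac{1}{2} d(x, M_{\alpha(x)}) > 0$; the ball $B(x, \rho(x))$ is then disjoint from $M_{\alpha(x)}$, hence from $N \subseteq M_{\alpha(x)}$. Every metric space being paracompact, one extracts a locally finite refinement $\{B_i\}_{i \in I}$ of this cover of uniformly bounded multiplicity (the standard Whitney scale-by-scale argument delivers this), and attaches an index $\alpha_i$ to each $B_i$ with $B_i \cap M_{\alpha_i} = \emptyset$. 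Subordinate to this cover one builds a non-negative Lipschitz partition of unity $\{\phi_i\}$ with $\phi_i$ supported in $B_i$, $\phi_i \equiv 0$ on $N$, and $\sum_i \phi_i \equiv 1$ on $M \setminus N$.

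The key estimate is that each product $\phi_i f$ lies in $V_{\alpha_i}$ with a uniform bound $\|\phi_i f\|_L \leq C\|f\|_L$. Membership is clear: $\phi_i f$ vanishes outside $B_i$, in particular on $M_{\alpha_i}$, which contains the basepoint. For the norm estimate, the product rule combined with the Whitney bounds $\|\phi_i\|_L \lesssim 1/d(B_i, N)$ and $|f(y)| \leq \|f\|_L \, d(y, N) \lesssim \|f\|_L \, d(B_i, N)$ for $y \in B_i$ (crucially using $f|_N = 0$) yields the bound. Bounded multiplicity of the cover then propagates this to the finite partial sums $g_F := \sum_{i \in F} \phi_i f$, giving $\|g_F\|_L \leq C' \|f\|_L$ uniformly in the finite $F \subseteq I$.

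Finally, $g_F \to f$ pointwise as $F$ exhausts $I$: on $N$ both sides vanish, and off $N$ local finiteness makes the sum eventually stable and equal to $f(x)$. Combined with the uniform Lipschitz bound, this gives $g_F \to f$ in the $w^*$-topology of $\Lip_0(M)$; since each $g_F$ lies in $\linsp(\bigcup_\alpha V_\alpha)$, we conclude $f \in \csp{w^*}(\bigcup_\alpha V_\alpha)$ and the proof is complete. The main technical hurdle is the construction of a Whitney cover with uniformly bounded multiplicity and an associated Lipschitz partition of unity in a possibly non-separable metric space; this is where the boundedness of $M$ enters, keeping the Whitney scales under control and ensuring that $f$ itself is bounded with $\|f\|_\infty \leq \|f\|_L \diam(M)$.
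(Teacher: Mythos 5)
You should first note that the paper itself gives no proof of this proposition: it is imported verbatim from Aliaga--Perneck\'a \cite{AP}, whose argument is of a completely different nature (weighting/multiplication operators $f\mapsto hf$, bounded on $\Lip_0(M)$ precisely because $M$ is bounded, together with a reduction of arbitrary families and an essential use of completeness). Your dual reduction is correct and clean: $\F{N}={}^\perp V_N$ by the bipolar theorem for the closed subspace $\F{N}\subseteq\F{M}$, and the proposition is indeed equivalent to the $w^*$-density $V_N\subseteq\csp{w^*}\bigl(\bigcup_\alpha V_\alpha\bigr)$. The fatal gap is in the Whitney step. Your key bound $\norm{\phi_i}_L\lesssim 1/d(B_i,N)$ is false: a bump subordinate to $B_i=B(x_i,\rho_i)$ with $\rho_i=\frac12 d(x_i,M_{\alpha_i})$ has Lipschitz constant of order $1/\rho_i$, and there is no comparison between $\sup_\alpha d(x,M_\alpha)$ and $d(x,N)$ --- a point can be close to \emph{every} $M_\alpha$ while remaining far from $\bigcap_\alpha M_\alpha$. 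Concretely, take $M=\set{0}\cup\set{u_n,v_n,x_n:n\in\Natural}$ with $d(x_n,u_n)=d(x_n,v_n)=d(u_n,v_n)=1/n$ and all other distances $1$; put $M_1=\set{0}\cup\set{u_n}$, $M_2=\set{0}\cup\set{v_n}$, so $N=\set{0}$. This $M$ is bounded and complete, yet every admissible Whitney ball is a singleton of scale $1/(2n)$ while $d(B_i,N)=1$; the only subordinate partition of unity is $\phi_y=\indicator{\set{y}}$, and with $f=d(\cdot,0)\in V_N$ one gets $\norm{\phi_{x_n}f}_L\geq n$, so both your term bound $\norm{\phi_i f}_L\leq C\norm{f}_L$ and the claimed uniform bound on \emph{all} finite partial sums fail (e.g.\ $F=\set{x_n}$ gives $\norm{g_F}_L=n$). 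In this example the density statement is still true, but only via cancellations between grouped terms (take $F$ a union of complete triples $\set{u_n,v_n,x_n}$); arranging such cancellations in general is exactly the hard content of the theorem, and a bounded-multiplicity argument cannot see it. (Separately, uniformly bounded multiplicity for Whitney covers requires doubling-type hypotheses and is unavailable in a general metric space; paracompactness yields only local finiteness.)

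A structural check confirms the argument cannot be repaired as stated: you never use completeness of $M$, but the proposition fails without it. Take $M=\set{0}\cup\set{1+1/k:k\in\Natural}\subseteq\Real$ and $M_n=\set{0}\cup\set{1+1/k:k\geq n}$; each $M_n$ is closed in $M$ and $\bigcap_n M_n=\set{0}$, yet $\mu:=\lim_k\delta(1+1/k)$ exists in $\F{M}$ (the sequence is norm-Cauchy), is nonzero, and lies in every $\F{M_n}$, so $\bigcap_n\F{M_n}\neq\set{0}=\F{\set{0}}$. By your own (correct) bipolar equivalence, the density $V_N\subseteq\csp{w^*}\bigl(\bigcup_n V_n\bigr)$ is then false for this bounded incomplete $M$, so no purely local partition-of-unity scheme that is insensitive to completeness can prove it. Any correct proof must exploit completeness and boundedness in an essential, quantitative way, as the multiplication-operator argument of \cite{AP} does.
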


For the proof of Theorem~\ref{t:ExposedMain} we will need further some notation and few lemmas.
Given a metric space $M$ we will set $\widetilde{M}:=M \times M \setminus \set{(x,x):x\in M}$ and $V=\set{m_{xy}:(x,y)\in \widetilde{M}}$ the set of molecules in $\Free(M)$.
The following folklore fact is also stated in disguise as Lemma~2.1 in~\cite{AP}.
The proof here is different from the one in~\cite{AP}.  
\begin{lem}\label{l:folklor}
Let $M$ be a metric space. 
Let us define
$Q:\ell_1(\widetilde{M})\to \Free(M)$ by $e_{(x,y)} \mapsto m_{xy}$ and linearly on $span\set{e_{(x,y)}}$.
Then $Q$ extends to an onto norm-one mapping.
\end{lem}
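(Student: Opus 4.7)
The plan is straightforward: first verify that $Q$ is well-defined and of norm one on the dense subspace $\mathrm{span}\{e_{(x,y)}:(x,y)\in\widetilde{M}\}$, extend to all of $\ell_1(\widetilde{M})$ by density, and then deduce surjectivity through a clean duality argument.

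For the norm computation I would use the fact that $\|m_{xy}\|_{\F M}=1$ for every $(x,y)\in\widetilde{M}$. The upper bound $\|m_{xy}\|\leq 1$ is immediate from the Lipschitz-norm duality, while the reverse inequality follows from McShane's extension theorem applied to the $1$-Lipschitz function on $\{x,y\}$ sending $x\mapsto d(x,y)$ and $y\mapsto 0$. Consequently, for any finite combination $z=\sum_{i=1}^{n} a_i e_{(x_i,y_i)}$ the triangle inequality gives $\|Q(z)\|\leq\sum_i|a_i|\|m_{x_iy_i}\|=\|z\|_1$, and the equality $\|Q(e_{(x,y)})\|=1=\|e_{(x,y)}\|_1$ shows that the operator norm on the span is exactly $1$. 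Since $\F M$ is complete, $Q$ extends uniquely to a norm-one bounded operator on all of $\ell_1(\widetilde{M})$.

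The main step is surjectivity, for which I would compute the adjoint $Q^*$ and recognise it as an isometric embedding. Under the canonical identification $\F M^*\cong\Lip_0(M)$, for which $\langle f,\delta(x)\rangle=f(x)$, one has for $f\in\Lip_0(M)$ and $(x,y)\in\widetilde{M}$,
$$(Q^*f)(x,y)=\langle f,Q(e_{(x,y)})\rangle=\langle f,m_{xy}\rangle=\frac{f(x)-f(y)}{d(x,y)}.$$
Taking the supremum over $(x,y)\in\widetilde{M}$ recovers exactly the Lipschitz norm $\|f\|_L$, so $\|Q^*f\|_\infty=\|f\|_L$; in other words $Q^*$ is an isometric embedding of $\Lip_0(M)$ into $\ell_\infty(\widetilde{M})$.

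To finish, I would invoke the standard duality principle that a bounded linear operator between Banach spaces is a quotient map (hence, in particular, surjective) if and only if its adjoint is an isometric embedding; applied to $Q$, this delivers both surjectivity onto $\F M$ and the norm-one property in a single step. I do not anticipate any serious obstacle: the argument is essentially a routine calculation of $Q^*$ combined with one textbook duality statement, and the only care needed is the verification that $\|m_{xy}\|=1$.
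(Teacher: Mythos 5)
Your argument is correct, and it reaches the conclusion by a route that is dual to, rather than identical with, the one in the paper. The paper proves surjectivity by noting $B^O_{\F M}\subset \overline{\conv}(V)\subset \overline{Q(B_{\ell_1})}$ (where $V$ is the set of molecules) and then upgrading the dense inclusion to an honest one via the completeness lemma (Lemma 2.23 in the book of Fabian et al.). You instead compute the adjoint explicitly, observe that $Q^*f=\bigl(\frac{f(x)-f(y)}{d(x,y)}\bigr)_{(x,y)\in\widetilde M}$ is an isometric embedding of $\Lip_0(M)$ into $\ell_\infty(\widetilde M)$ because the supremum of these quotients is exactly $\norm{f}_L$, and invoke the textbook equivalence ``$T$ is a quotient map iff $T^*$ is an isometry into.'' These are two packagings of the same underlying mathematics: the statement that $Q^*$ is an isometry is, via Hahn--Banach separation, precisely the statement $B_{\F M}=\overline{\conv}(V)$ that the paper uses, and the proof of the quotient-map duality you cite contains the same completeness step as Lemma 2.23. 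What your version buys is that the dual object is made explicit (the map $f\mapsto Q^*f$ is the classical de Leeuw transform), and the two conclusions (norm one and surjectivity) drop out of a single citation; what the paper's version buys is that it only quotes a fact about $\F M$ that is already standard in this literature, without passing to adjoints. One small point of care, which you correctly flag: the norming function for $m_{xy}$ produced by McShane's theorem must be adjusted by a constant to vanish at the base point before it lies in $\Lip_0(M)$; this changes neither its Lipschitz constant nor its action on $m_{xy}$.
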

\begin{proof}
The fact that $\norm{Q}=1$ is clear so we can extend $Q$ to the whole space with the same norm. Let us call the extension $Q$ again. 
We will prove that $B_{\Free(M)}^O\subset Q(B_{\ell_1}^O)$, where $B_X^O$ denotes the open unit ball of a Banach space $X$.
For this it is enough to use Lemma 2.23 in~\cite{FHHMPZ}, i.e. we need to check that $B_{\mathcal F(M)}^O\subset \overline{Q(B_{\ell_1}^O)}$.
But we have $B_{\Free(M)}^O\subset \overline{\conv}(V) \subset \overline{Q(B_{\ell_1})}=\overline{Q(B_{\ell_1}^O)}$.
\end{proof}

The next lemma is standard.
\begin{lem}\label{l:convexity}
Let $a\in S_{\ell_1}$ and $b\in B_{\ell_\infty}$. Assume that $1-\alpha\varepsilon\leq \duality{a,b}$ for some $0<\alpha,\varepsilon<1$.
Denote $B=\set{n\in \Natural: \abs{b_n}\leq (1-\alpha)}$. 
Then $\sum_{n\in B}\abs{a_n}\leq \varepsilon$.
\end{lem}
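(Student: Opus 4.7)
The plan is a direct estimate on the duality pairing, splitting the sum according to the set $B$.

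First I would bound $\duality{a,b}$ by $\sum_n |a_n||b_n|$ (using $\duality{a,b}\le \abs{\duality{a,b}}\le \sum_n\abs{a_n}\abs{b_n}$). Then I split this sum over the partition $\Natural = B \sqcup (\Natural\setminus B)$: on $B$ we have $\abs{b_n}\le 1-\alpha$ by definition, while on $\Natural\setminus B$ we use the global bound $\abs{b_n}\le 1$ coming from $b\in B_{\ell_\infty}$. Writing $s:=\sum_{n\in B}\abs{a_n}$ and using $\norm{a}_1=1$ to get $\sum_{n\notin B}\abs{a_n}=1-s$, this yields
\[
\duality{a,b} \;\le\; (1-\alpha)s + (1-s) \;=\; 1-\alpha s.
\]

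Combining with the hypothesis $1-\alpha\varepsilon\le \duality{a,b}$ gives $1-\alpha\varepsilon\le 1-\alpha s$, and since $\alpha>0$ we conclude $s\le \varepsilon$, which is exactly the desired inequality.

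There is no real obstacle here; the only thing to be a little careful about is the direction of the inequality (the hypothesis concerns $\duality{a,b}$ itself, not $\abs{\duality{a,b}}$, but this is harmless since we only need an upper bound on $\duality{a,b}$ in terms of $s$).
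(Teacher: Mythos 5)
Your proof is correct and is essentially identical to the paper's: both split $\sum_n a_n b_n$ over $B$ and its complement, bound $\abs{b_n}$ by $1-\alpha$ on $B$ and by $1$ off $B$, and use $\norm{a}_1=1$ to deduce $\duality{a,b}\leq 1-\alpha\sum_{n\in B}\abs{a_n}$. Nothing to add.
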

\begin{proof}
We denote $G:=\Natural \setminus B$.
We have 
\[
\begin{aligned}
1-\varepsilon\alpha &\leq \sum_{n=1}^\infty a_nb_n\leq\sum_{n\in G} \abs{a_nb_n} +\sum_{n\in B} \abs{a_nb_n}\\
&\leq \sum_{n \in G} \abs{a_n}+(1-\alpha)\sum_{n\in B} \abs{a_n}\\
&\leq \sum_{n \in \Natural} \abs{a_n} -\alpha \sum_{n \in B} \abs{a_n}=1-\alpha \sum_{n\in B} \abs{a_n}.
\end{aligned}
\]
It follows that $\sum_{n\in B}\abs{a_n} \leq \varepsilon$.
\end{proof}

For a metric space $M$, points $p,q \in M$ and $\varepsilon>0$ we will denote
\[
[p,q]_\varepsilon:=\set{x\in M: d(p,x)+d(x,q)\leq \frac{1}{1-\varepsilon}d(p,q)}.
\]
The properties of the magic function collected in the following lemma have been proved already in~\cite{ikw2}.
\begin{lem}\label{lemma:IKWfunction} Let $(p,q)\in \widetilde{M}$. We have
\begin{enumerate}
\item $f_{pq}$ is Lipschitz and $\norm{f_{pq}}_{L}\leq 1$. 
\item Let $u\neq v \in M$ and $\varepsilon>0$ be such that $\frac{f_{pq}(u)-f_{pq}(v)}{d(u,v)}>1-\varepsilon$. Then both $u,v \in [p,q]_\varepsilon$. 
\item If $(u,v)\in \widetilde{M}$ and $\frac{f_{pq}(u)-f_{pq}(v)}{d(u,v)}=1$, then both $u,v\in [x,y]$.
\end{enumerate} 
\end{lem}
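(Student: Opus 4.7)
The plan is to observe that the second summand inside the parentheses in the definition of $f_{pq}$ is constant in $t$; it only serves to enforce $f_{pq}(0)=0$ and contributes nothing to any Lipschitz difference. Setting $g(t):=\frac{d(p,q)}{2}\cdot \frac{d(t,q)-d(t,p)}{d(t,q)+d(t,p)}$, so that $f_{pq}(u)-f_{pq}(v)=g(u)-g(v)$ for all $u,v\in M$, reduces all three assertions to a sharp estimate on the slopes of $g$.

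I would fix $u,v\in M$ and abbreviate $a:=d(u,p)$, $b:=d(u,q)$, $c:=d(v,p)$, $e:=d(v,q)$. Clearing denominators and using the elementary identity $(b-a)(c+e)-(e-c)(a+b)=2(bc-ae)$ gives
\[g(u)-g(v)=\frac{d(p,q)\,(bc-ae)}{(a+b)(c+e)}.\]
The crux of the argument is then the pair of symmetric splittings
\[bc-ae=b(c-a)+a(b-e)=c(b-e)+e(c-a).\]
Applying the $1$-Lipschitz bounds $|c-a|,|b-e|\leq d(u,v)$ to each splitting yields $|bc-ae|\leq(a+b)d(u,v)$ and $|bc-ae|\leq(c+e)d(u,v)$, hence
\[\frac{|g(u)-g(v)|}{d(u,v)}\leq \min\!\left(\frac{d(p,q)}{a+b},\,\frac{d(p,q)}{c+e}\right).\]
The triangle inequality in $M$ gives $a+b\geq d(p,q)$ and $c+e\geq d(p,q)$, so the right-hand side is at most $1$, which proves (1).

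Parts (2) and (3) then drop out. If $\frac{f_{pq}(u)-f_{pq}(v)}{d(u,v)}>1-\varepsilon$, both fractions inside the $\min$ exceed $1-\varepsilon$, i.e.\ $d(u,p)+d(u,q)<d(p,q)/(1-\varepsilon)$ and similarly for $v$; this is exactly $u,v\in[p,q]_\varepsilon$, proving (2). If the slope equals $1$, the same argument combined with the reverse triangle inequality $a+b\geq d(p,q)$ forces $a+b=c+e=d(p,q)$, so $u,v\in[p,q]$, which is (3). The only step that requires any guesswork is spotting the algebraic identity for $(b-a)(c+e)-(e-c)(a+b)$ together with the two complementary decompositions of $bc-ae$; once they are written down, the rest of the lemma is a single application of the triangle inequality, so I do not expect any serious obstacle.
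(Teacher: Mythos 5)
Your proof is correct, and it supplies something the paper itself does not: the authors give no argument for this lemma, deferring entirely to the corrigendum of Ivakhno, Kadets and Werner \cite{ikw2}. Your computation is essentially the one found there. The key steps all check out: the constant term in $f_{pq}$ indeed cancels from every difference quotient; the identity $(b-a)(c+e)-(e-c)(a+b)=2(bc-ae)$ is correct; the two decompositions $bc-ae=b(c-a)+a(b-e)=c(b-e)+e(c-a)$ combined with $\abs{c-a},\abs{b-e}\leq d(u,v)$ give exactly the bound
\[
\frac{\abs{g(u)-g(v)}}{d(u,v)}\leq \min\left(\frac{d(p,q)}{d(u,p)+d(u,q)},\ \frac{d(p,q)}{d(v,p)+d(v,q)}\right),
\]
from which (1), (2) and (3) follow by the triangle inequality, by the definition of $[p,q]_\varepsilon$ (note the paper defines it with a non-strict inequality, so your strict bound is more than enough), and by the equality case, respectively. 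You also correctly read through the paper's typos: $d(x,y)$ in the definition of $f_{pq}$ should be $d(p,q)$, and $[x,y]$ in item (3) should be $[p,q]$. The one cosmetic remark is that (1) should also record $f_{pq}(0)=0$, so that $f_{pq}\in \Lip_0(M)$ as needed later, but that is immediate from the definition.
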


Let us remark at this point that if $[p,q]=\set{p,q}$, then $f_{pq}$ exposes $m_{pq}$ among molecules (immediate from Lemma~\ref{lemma:IKWfunction}~(3)) and also among those $\mu \in B_{\Free(M)}$ which have finite support (or more generally such that $\norm{\mu}=\norm{a}_1$ in the representation coming from Lemma~\ref{l:folklor}). 
The next lemma prepares the ground for the remaining cases.

\begin{lem}\label{l:QuotientTrick}
Let $M$ be a metric space with the base point $0=q$ and let $p\neq q \in M$ be such that $[p,q]=\set{p,q}$. 
Assume that $\mu \in B_{\Free(M)}$ satisfies $\duality{\mu,f_{pq}}=1$.
Then for every $\varepsilon, \alpha\in (0,\frac12)$ we have $\mu \in \Free([p,q]_\alpha)+2\varepsilon B_{\Free(M)}$.
\end{lem}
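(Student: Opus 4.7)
The plan is to pull $\mu$ back through the quotient map $Q:\ell_1(\widetilde M)\to \Free(M)$ of Lemma~\ref{l:folklor}, and then use the properties of the magic function collected in Lemma~\ref{lemma:IKWfunction} to show that the $\ell_1$-mass of this pullback is concentrated on pairs $(x,y)$ with both endpoints lying in $[p,q]_\alpha$. Fix any $\eta\in(0,2\varepsilon\alpha)$, which is positive by the assumption $\varepsilon,\alpha\in(0,\tfrac{1}{2})$. Since $Q$ is a quotient of norm one and $\norm{\mu}\leq 1$, Lemma~\ref{l:folklor} produces $a\in\ell_1(\widetilde M)$ with $Q(a)=\mu$ and $\norm{a}_1<1+\eta$. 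Noting that $m_{yx}=-m_{xy}$, I perform the substitution $(a_{(x,y)},a_{(y,x)})\mapsto(\max\set{0,a_{(x,y)}-a_{(y,x)}},\max\set{0,a_{(y,x)}-a_{(x,y)}})$ on each unordered pair $\set{x,y}$; this preserves $Q(a)$ and does not increase $\norm{a}_1$, so I may assume $a_{(x,y)}\geq 0$ for all $(x,y)\in\widetilde M$.

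Next, define $b_{(x,y)}:=\duality{m_{xy},f_{pq}}=(f_{pq}(x)-f_{pq}(y))/d(x,y)$. By Lemma~\ref{lemma:IKWfunction}(1) we have $b\in B_{\ell_\infty(\widetilde M)}$, and by hypothesis
\[1=\duality{\mu,f_{pq}}=\sum_{(x,y)\in\widetilde M}a_{(x,y)}b_{(x,y)}.\]
Partition $\widetilde M=G\sqcup B$ with $G:=\set{(x,y):b_{(x,y)}>1-\alpha}$. Since $a\geq 0$, $b_{(x,y)}\leq 1$ on $G$, and $b_{(x,y)}\leq 1-\alpha$ on $B$, the same computation that underlies Lemma~\ref{l:convexity} gives
\[1\leq\sum_{G}a_{(x,y)}+(1-\alpha)\sum_{B}a_{(x,y)}=\norm{a}_1-\alpha\sum_{B}a_{(x,y)},\]
so $\sum_{B}a_{(x,y)}\leq(\norm{a}_1-1)/\alpha<\eta/\alpha<2\varepsilon$.

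Finally, Lemma~\ref{lemma:IKWfunction}(2) (applied with parameter $\alpha$) ensures that for every $(x,y)\in G$ both $x,y\in[p,q]_\alpha$. The series $\mu':=\sum_{(x,y)\in G}a_{(x,y)}m_{xy}$ then converges absolutely in $\Free(M)$; each summand lies in the canonically and isometrically embedded subspace $\Free([p,q]_\alpha)$ (which contains the base point $q=0$), whence $\mu'\in\Free([p,q]_\alpha)$. The remainder $\mu-\mu'=\sum_{(x,y)\in B}a_{(x,y)}m_{xy}$ has norm at most $\sum_{B}a_{(x,y)}<2\varepsilon$, which proves $\mu\in\Free([p,q]_\alpha)+2\varepsilon B_{\Free(M)}$. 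The only conceptual difficulty is the simultaneous handling of the sign ambiguity in the $\ell_1$-representation and the translation of the dual-pairing hypothesis into an $\ell_1$-concentration estimate; notably, the assumption $[p,q]=\set{p,q}$ itself does not seem to enter this particular step, and will only be invoked when this lemma is combined with Lemma~\ref{lemma:IKWfunction}(3) in the proof of the main theorem.
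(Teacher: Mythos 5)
Your proof is correct and follows essentially the same route as the paper: lift $\mu$ to $\ell_1(\widetilde M)$ via Lemma~\ref{l:folklor} with $\ell_1$-norm close to $1$, show that the coefficients indexed by pairs where $\duality{m_{xy},f_{pq}}\leq 1-\alpha$ carry mass less than $2\varepsilon$, and apply Lemma~\ref{lemma:IKWfunction}~(2) to place the remaining pairs in $[p,q]_\alpha$. The only cosmetic differences are that you normalize the signs of the coefficients instead of invoking Lemma~\ref{l:convexity} with absolute values, and your closing observation that $[p,q]=\set{p,q}$ is not needed in this lemma is accurate and consistent with the paper.
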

\begin{proof}
Let us observe right away that by the hypothesis $\norm{\mu}=1$.
Let $\varepsilon,\alpha \in (0,\frac12)$ be fixed.
By Lemma~\ref{l:folklor} there exist $a=(a_i) \in \ell_1$ and $(p_i), (q_i) \subset M$ such that $\mu=\sum_{i=1}^\infty a_i m_{p_iq_i}$ and $\norm{a}_1\leq \norm{\mu}+\frac{\varepsilon\alpha}{1-\varepsilon\alpha}$.
We have
\[
1-\varepsilon\alpha \leq \frac1{\norm{a}_1}=\duality{\frac{\mu}{\norm{a}_1},f_{pq}}= \sum_{i=1}^\infty \frac{a_i}{\norm{a}_1}\duality{m_{p_iq_i},f_{pq}}.
\]
Now if we denote 
$B=\set{i\in \Natural: \abs{\duality{m_{p_iq_i},f_{pq}}}\leq (1-\alpha)}$, then Lemma~\ref{l:convexity} yields that $\sum_{i\in B} \abs{\frac{a_i}{\norm{a}_1}}\leq \varepsilon$ and so
$\sum_{i\in B} \abs{a_i}\leq 2\varepsilon$.
It follows from Lemma~\ref{lemma:IKWfunction}~(2) that for every $i \in \Natural \setminus B$ we have $p_i,q_i \in [p,q]_\alpha$. 
The conclusion is now immediate.
\end{proof}

\begin{proof}[Proof of Theorem~\ref{t:ExposedMain}]
We can assume without loss of generality that $0=q$. Indeed, a change of the base point in $M$ induces a linear isometry between the corresponding Lipschitz free spaces which preserves the molecules.
Lemma~\ref{l:QuotientTrick} shows that if $\mu \in B_{\Free(M)}$ satisfies $\duality{\mu,f_{pq}}=1$ then $\mu \in \bigcap_{\alpha>0} \Free([p,q]_\alpha)$. Since $[p,q]_{1}$ is bounded, 
Proposition~\ref{p:Intersection} yields that $\mu \in \Free([p,q])=\Free(\set{p,q})$. This is a 1-dimensional vector space so $\mu=\pm m_{pq}$ but only the choice of the plus sign is reasonable.
\end{proof}

\begin{rem}
Apart from the obvious fact that Theorem~\ref{t:ExposedMain} strengthens and generalizes some of the results in~\cite{Duality} let us also point out that one of the proofs of the main result in~\cite{dkp} (i.e. the characterization of $M$ such that $\Free(M)=\ell_1(\Gamma)$) becomes now much simpler.
\end{rem}

\textbf{Acknowledgment. } The authors are grateful to Ram\'on Aliaga and Eva Perneck\'a for sending them their preprint.

\end{document}